\def\@captype{figure}
\newtheorem{theorem}{Theorem}[section]
\newtheorem{lemma}[theorem]{Lemma}
\newtheorem{proposition}[theorem]{Proposition}
\newtheorem{definition}[theorem]{Definition}
\newtheorem{cond}[theorem]{Condition}
\numberwithin{equation}{section}
\newcommand{\N}{\mathbb{N}}
\newcommand{\Conf}{\emph{\text{Conf}}}
\newcommand{\sss}{\scriptscriptstyle}
\newcommand{\TV}{\sss \mathrm{TV}}
\newcommand{\vep}{\varepsilon}
\newcommand{\eqn}[1]{\begin{equation} #1 \end{equation}}
\newcommand{\eqan}[1]{\begin{align} #1 \end{align}}
\newcommand{\nn}{\nonumber}
\newcommand{\expec}{\mathbb{E}}
\newcommand{\prob}{\mathbb{P}}
\newcommand{\cstat}{c_{n,\mathrm{stat}}}
\newcommand{\SP}{\mathsf{SP}}
\newcommand{\SA}{\mathsf{SA}}
\newcommand{\pstat}{\mathbb{P}^{\mathrm{stat}}}
\renewcommand{\pmod}{\mathbb{P}^{\mathrm{mod}}}
\newcommand{\dmax}{d_\mathrm{max}}
\newcommand{\dtv}[1]{d_{\sss \rm TV}(#1)}
\newcommand{\probCM}{P_{x,\eta}}
\newcommand{\indic}[1]{\mathbbm{1}_{\{#1\}}}
\newcommand{\CMnd}{{\rm CM}_n(\boldsymbol{d})}
\begin{document}

\title{Random walks on dynamic configuration models:\\ 
a trichotomy}

\author{\renewcommand{\thefootnote}{\arabic{footnote}}
Luca Avena
\footnotemark[1]
\\
\renewcommand{\thefootnote}{\arabic{footnote}}
Hakan G\"{u}lda\c{s}
\footnotemark[1]
\\
\renewcommand{\thefootnote}{\arabic{footnote}}
Remco van der Hofstad
\footnotemark[2]
\\
\renewcommand{\thefootnote}{\arabic{footnote}}
Frank den Hollander
\footnotemark[1]
}

\footnotetext[1]{
Mathematical Institute, Leiden University, P.O.\ Box 9512,
2300 RA Leiden, The Netherlands
}
\footnotetext[2]{
Department of Mathematics and Computer Science, Eindhoven University of Technology, 
P.O.\ Box 513, 5600 MB Eindhoven, The Netherlands
}

\date{\today}


\maketitle

\begin{abstract}
We consider a dynamic random graph on $n$ vertices that is obtained by starting from a random 
graph generated according to the configuration model with a prescribed degree sequence and 
at each unit of time randomly rewiring a fraction $\alpha_n$ of the edges. We are interested in 
the mixing time of a random walk without backtracking on this dynamic random graph in the limit 
as $n\to\infty$, when $\alpha_n$ is chosen such that $\lim_{n\to\infty} \alpha_n (\log n)^2 = \beta 
\in [0,\infty]$. In \cite{AGvdHdH} we found that, under mild regularity conditions on the degree 
sequence, the mixing time is of order $1/\sqrt{\alpha_n}$ when $\beta=\infty$. In the present paper 
we investigate what happens when $\beta \in [0,\infty)$. It turns out that the mixing time is of order 
$\log n$, with the scaled mixing time exhibiting a one-sided cutoff when $\beta \in (0,\infty)$ and a 
two-sided cutoff when $\beta=0$. The occurrence of a one-sided cutoff is a rare phenomenon. In 
our setting it comes from a competition between the time scales of mixing on the static graph, as 
identified by Ben-Hamou and Salez \cite{BHS}, and the regeneration time of first stepping across 
a rewired edge. 
\end{abstract}

{\small \medskip\noindent
{\it Mathematics Subject Classification 2010.} 
60K37, 
82C27. 

\medskip\noindent
{\it Key words and phrases.} 
Configuration model, random dynamics, random walk, mixing time, cutoff.

\medskip\noindent
{\it Acknowledgment.} 
The work in this paper was supported by the Netherlands Organisation for Scientific Research (NWO) 
through Gravitation-grant NETWORKS-024.002.003. RvdH was also supported by NWO through 
VICI-grant 639.033.806.}



\section{Introduction}
\label{S1}


\subsection{Background}
\label{MotBack}

The goal of the present paper is to study the mixing time of a random walk \emph{without 
backtracking} on a dynamic version of the configuration model. The \emph{static} configuration 
model is a random graph with a prescribed degree sequence. For random walk on the static 
configuration model, with or without backtracking, the asymptotics of the associated mixing 
time, and related properties such as the presence of the so-called cutoff phenomenon, were 
derived recently by Berestycki, Lubetzky, Peres and Sly~\cite{BLPS}, and by Ben-Hamou 
and Salez~\cite{BHS}. In particular, under mild assumptions on the degree sequence, 
guaranteeing that the graph is an \emph{expander} with high probability, the mixing time 
was shown to be of order $\log n$, with $n$ the number of vertices. 

In an earlier paper \cite{AGvdHdH} we consider a \emph{discrete-time dynamic} version of 
the configuration model, where at each unit of time a fraction $\alpha_n$ of the edges is 
sampled and rewired uniformly at random. Our dynamics \emph{preserves the degrees} 
of the vertices. Consequently, when considering a random walk on this dynamic configuration 
model, its \emph{stationary distribution remains constant over time} and the analysis of 
its mixing time is a well-posed question. It is natural to expect that, due to the graph 
dynamics, the random walk \emph{mixes faster} than the $\log n$ order known for 
the static model. Under very mild assumptions on the prescribed degree sequence
(Condition~\ref{cond-degree-reg} below), we have shown that this is indeed the case 
when $(\alpha_n)_{n\in\N}$ satisfies $\lim_{n\to\infty} \alpha_n(\log n)^2 = \infty$, which 
corresponds to a regime of `fast enough' graph dynamics. In particular, we have shown 
that for every $\varepsilon \in(0,1)$ the $\varepsilon$-mixing time grows like $\sqrt{2\log
(1/\varepsilon)/\alpha_n}$ as $n\to\infty$ (when also $\lim_{n\to\infty} \alpha_n = 0$), 
with high probability (in the sense of Definition~\ref{def:whp} below). In the present 
paper we look at a slower dynamics, namely, $(\alpha_n)_{n\in\N}$ satisfying $\lim_{n\to\infty} 
\alpha_n(\log n)^2 = \beta \in [0,\infty)$. Our main result (Theorem~\ref{mainthm} below) 
states that, under somewhat stronger assumptions on the prescribed degree sequence
(Condition~\ref{cond-degree-reg2} below), the mixing time is of order $\log n$, as for the 
static model, but that there is an interesting difference between the cases $\beta \in (0,\infty)$ 
and $\beta=0$.

The rest of the paper is organised as follows. In Section~\ref{Model} we define the model. 
This is a verbatim repetition of what was written in \cite[Section 1.2]{AGvdHdH}, in which 
we introduce notation and set the stage. In Section~\ref{Trichotomy} we state our main 
theorem, which is a \emph{trichotomy} for the cases $\beta=\infty$, $\beta \in (0,\infty)$ 
and $\beta=0$. In Section~\ref{Discussion} we place this theorem in its proper context.

Throughout the sequel we use standard notations for the asymptotic comparison of functions
$f,g\colon\,\N\to[0,\infty)$: $f(n) = O(g(n))$ or $g(n) = \Omega(f(n))$ when $\limsup_{n\to\infty} 
f(n)/g(n) < \infty$; $f(n) = o(g(n))$ or $g(n) = \omega(f(n))$ when $\lim_{n\to\infty}f(n)/g(n) = 0$; 
$f(n) = \Theta(g(n))$ when both $f(n) = O(g(n))$ and $g(n) = O(f(n))$.


\subsection{Model}
\label{Model}

We start by defining the model and setting up the notation. The set of vertices is denoted by 
$V$ and the degree of a vertex $v\in V$ by $d(v)$. Each vertex $v\in V$ is thought of as being 
incident to $d(v)$ \emph{half-edges} (see Fig.~\ref{fig:halfedge}). We write $H$ for the set of 
half-edges, and assume that each half-edge is associated to a vertex via incidence. We denote 
by $v(x)\in V$ the vertex to which $x\in H$ is incident and by $H(v) \coloneqq \{x\in H\colon\,v(x)=v \}
\subset H$ the set of half-edges incident to $v\in V$. If $x,y\in H(v)$ with $x\neq y$, then we 
write $x\sim y$ and say that $x$ and $y$ are siblings of each other. The (forward) degree of a 
half-edge $x\in H$ is defined as
\begin{equation}
\label{degdef}
\deg(x) \coloneqq d(v(x))-1.
\end{equation}
We consider graphs on $n$ vertices, i.e., $|V| = n$, with $m$ edges, so that 
\begin{equation}
|H|=\sum_{v\in V} d(v) = 2m \eqqcolon \ell.
\end{equation}

\begin{figure}[htbp]
\centering
\includegraphics[width=0.25\textwidth]{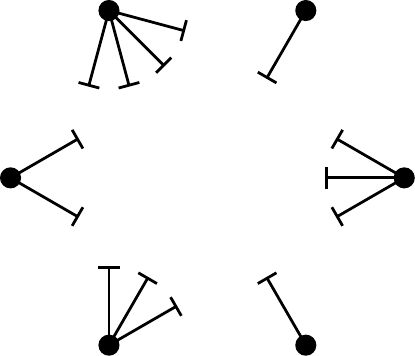}
\vspace{0.2cm}
\caption{\small Vertices with half-edges.}
\label{fig:halfedge}
\end{figure}

The \emph{edges} of the graph will be given by a \emph{configuration} that is a \emph{pairing of 
half-edges}. We denote by $\eta(x)$ the half-edge paired to $x\in H$ in the configuration $\eta$.
A configuration $\eta$ will be viewed as a bijection of $H$ without fixed points and with the property 
that $\eta(\eta(x)) = x$ for all $x\in H$ (also called an involution). With a slight abuse of notation, we 
will use the same symbol $\eta$ to denote the set of pairs of half-edges in $\eta$, so $\{x,y\}\in\eta$ 
means that $\eta(x) = y$ and $\eta(y) = x$. Each pair of half-edges in $\eta$ will also be called an 
edge. The set of all configurations on $H$ will be denoted by $\Conf_H$.

We note that each configuration gives rise to a graph that may contain self-loops (edges having 
the same vertex on both ends) or multiple edges (between the same pair of vertices). On the 
other hand, a graph can be obtained via several distinct configurations.

We will consider asymptotic statements in the sense of $|V| = n \to \infty$. Thus, quantities like
$V, H, d, \deg$ and $\ell$ all depend on $n$. In order to lighten the notation, we often suppress 
$n$ from the notation.


\subsubsection{Dynamic configuration model}
\label{DCM}

We recall the definition of the configuration model, phrased in our notation. The configuration 
model on $V$ with degree sequence $(d(v))_{v\in V}$ is the uniform distribution on $\Conf_H$. 
We sometimes write $d_n=(d(v))_{v\in V}$ when we wish to stress the $n$-dependence of the 
degree sequence. A sample $\eta$ from the configuration model can be generated by taking 
a uniform pairing of the elements of $H$. The resulting configuration $\eta$ gives rise to a 
multi-graph on $V$ with degree sequence $(d(v))_{v\in V}$.
 
We begin by describing the random graph process. It is convenient to take as the state space 
the set of configurations $\Conf_H$. For a fixed initial configuration $\eta$ and fixed $2 \leq k 
\leq m = \ell/2$, the graph evolves as follows (see Fig.~\ref{fig:dcm}):
\begin{enumerate}
\item
At each time $t \in \N$, pick $k$ edges (pairs of half-edges) from $C_{t-1}$ uniformly at random 
without replacement. Cut these edges to get $2k$ half-edges and denote this set of half-edges 
by $R_t$.
\item 
Generate a uniform pairing of these half-edges to obtain $k$ new edges. Replace the $k$ edges 
chosen in step 1 by the $k$ new edges to get the configuration $C_t$ at time $t$.
\end{enumerate}
This process rewires $k$ edges at each step by applying the configuration model sampling 
algorithm restricted to $k$ uniformly chosen edges. Since half-edges are not created or destroyed, 
the degree sequence of the graph given by $C_t$ is the same for all $t\in\N_0$. This gives us a 
Markov chain on the set of configurations $\Conf_H$. For $\eta,\zeta\in\Conf_H$, the \emph{transition 
probabilities} for this Markov chain are given by
\begin{align}
\label{DC}
Q(\eta,\zeta) = Q(\zeta,\eta) \coloneqq
\begin{cases}
\frac{1}{(2k-1)!!}\frac{\binom{m - d_\text{Ham}(\eta,\zeta)}{k - d_\text{Ham}(\eta,\zeta)}}{\binom{m}{k}} 
& \text{if }d_\text{Ham}(\eta,\zeta)\leq k, \\
0 
& \text{otherwise},
\end{cases}
\end{align}
where $d_\text{Ham}(\eta,\zeta) \coloneqq |\eta\setminus\zeta| = |\zeta\setminus\eta|$ is the Hamming 
distance between configurations $\eta$ and $\zeta$, which is the number of edges that appear in $\eta$ 
but not in $\zeta$. The factor $1/(2k-1)!!$ comes from the uniform pairing of the half-edges, while the 
factor $\binom{m - d_\text{Ham}(\eta,\zeta)}{k - d_\text{Ham}(\eta,\zeta)}/\binom{m}{k}$ comes 
from choosing uniformly at random a set of $k$ edges in $\eta$ that contains the edges in 
$\eta\setminus\zeta$. It is easy to see that this Markov chain is irreducible and aperiodic, with
stationary distribution the uniform distribution on $\Conf_H$, denoted by $\text{Conf}_H$, which 
is the distribution of the configuration model.

\begin{figure}[htbp]
\centering
\vspace{0.2cm}
\includegraphics[width=0.5\textwidth]{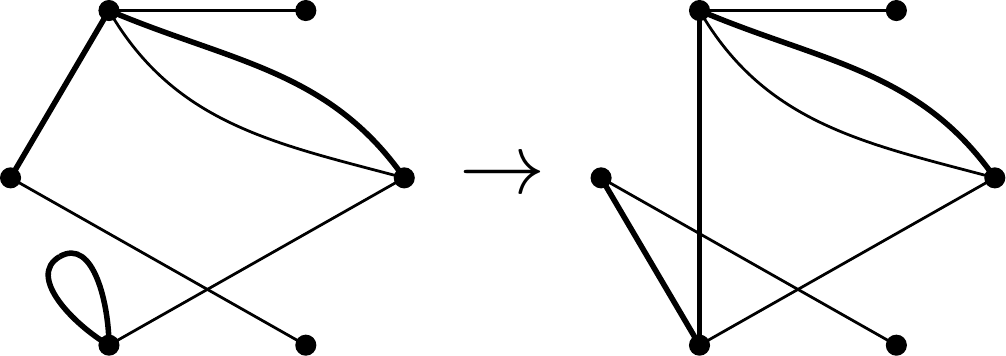}
\vspace{0.2cm}
\caption{\small One move of the dynamic configuration model. Bold edges on the left are the ones 
chosen to be rewired. Bold edges on the right are the newly formed edges.}
\label{fig:dcm}
\end{figure}


\subsubsection{Random walk without backtracking}
\label{NBTRW}

On top of the random graph process we define the random walk without backtracking, i.e., 
the walk cannot traverse the same edge twice in a row. Like Ben-Hamou and Salez~\cite{BHS}, 
we define it as a random walk on the set of half-edges $H$, which is more convenient in the 
dynamic setting because the edges change over time while the half-edges do not.  For a fixed 
configuration $\eta$ and half-edges $x,y\in H$, the transition probabilities of the random walk 
are given by (recall \eqref{degdef})
\begin{align}
\label{NBT}
P_\eta(x,y) \coloneqq
\begin{cases}
\frac{1}{\deg(\eta(x))} & \text{if }\eta(x)\sim y\text{ and }\eta(x)\neq y, \\
0 & \text{otherwise}.
\end{cases}
\end{align}
When the random walk is at half-edge $x$ in configuration $\eta$, it jumps to one of the siblings 
of the half-edge it is paired to uniformly at random (see Fig.~\ref{fig:nbrw_move}). The transition 
probabilities are symmetric with respect to the pairing given by $\eta$, i.e., $P_\eta(x,y) 
= P_\eta(\eta(y),\eta(x))$, in particular, they are doubly stochastic, and so the uniform distribution 
on $H$, denoted by $U_H$, is stationary for $P_\eta$ for any $\eta\in\Conf_H$.

\begin{figure}[htbp]
\centering
\includegraphics[width=0.25\textwidth]{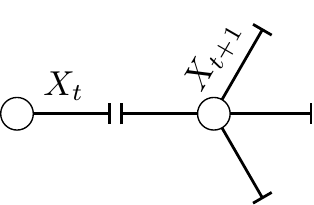}
\caption{\small The random walk moves from half-edge $X_t$ to half-edge $X_{t+1}$, one of 
the siblings of the half-edge $\eta(X_t)$ that $X_t$ is paired to.}
\label{fig:nbrw_move}
\end{figure}

\subsubsection{Random walk on dynamic configuration model}

The random walk without backtracking on the dynamic configuration model is the joint Markov 
chain $(M_t)_{t\in\N_0} = (C_t,X_t)_{t\in\N_0}$ in which $(C_t)_{t\in\N_0}$ is the Markov chain 
on the set of configurations $\Conf_H$ as described in~\eqref{DC}, and $(X_t)_{t\in\N_0}$ 
is the random walk that at each time step $t$ jumps according to the transition probabilities 
$P_{C_t}(\cdot,\cdot)$ as in~\eqref{NBT}. 

Formally, for initial configuration $\eta$ and half-edge $x$, the one-step evolution of the joint 
Markov chain is given by the conditional probabilities
\begin{align}
\prob_{\eta,x}\big(C_t = \zeta, X_t = z \mid C_{t-1} = \xi, X_{t-1} = y\big) 
= Q(\xi,\zeta)\,P_\zeta(y,z),
\qquad t \in \N,
\end{align}
with
\begin{align}
\prob_{\eta,x}(C_0 = \eta,X_0=x) = 1.
\end{align}
Thus, we first rewire the graph, and afterwards let the random walk make a non-backtracking step 
in the updated configuration. It is easy to see that if $d(v)>1$ for all $v\in V$, then this Markov chain 
is irreducible and aperiodic, and has the unique stationary distribution $\text{Conf}_H \times U_H$. 

While the graph process $(C_t)_{t\in\N_0}$ and the joint process $(M_t)_{t\in\N_0}$ are Markovian, 
the random walk $(X_t)_{t\in\N_0}$ is \emph{not}. However, $U_H$ is still the stationary distribution 
of $(X_t)_{t\in\N_0}$. Indeed, for any $\eta\in\Conf_H$ and $y \in H$, we have
\begin{equation}
\sum_{x\in H}U_H(x)\,\prob_{\eta,x}(X_t = y) 
= \sum_{x\in H} \frac{1}{\ell}\,\prob_{\eta,x}(X_t = y) = \frac{1}{\ell} = U_H(y).
\end{equation}
The next to last equality uses that $\sum_{x\in H}\prob_{\eta,x}(X_t = y)=1$ for every $y\in H$,
which can be seen by conditioning on the graph process and using that the space-time 
inhomogeneous random walk has a doubly stochastic transition matrix (recall the remarks 
made below \eqref{NBT}).


\subsection{A trichotomy}
\label{Trichotomy}

We are interested in the behaviour of the total variation distance between the distribution of 
$X_t$ and the uniform distribution
\begin{equation}
\mathcal{D}_{\eta,x}(t) \coloneqq \|\prob_{\eta,x}(X_t\in\cdot\,)-U_H(\cdot)\|_{\TV}.
\end{equation}
Note that $\mathcal{D}_{\eta,x}(t)$ depends on the initial configuration $\eta$ and half-edge $x$. 
We will prove statements that hold for \emph{typical} choices of $(\eta,x)$ under the uniform 
distribution $\mu_n$ (recall that $H$ depends on the number of vertices $n$) given by
\begin{equation}
\label{mu}
\mu_n := \text{Conf}_{H} \times U_H \quad \text{ on  }  \Conf_H \times H,
\end{equation} 
where {\em typical} is made precise through the following definition:

\begin{definition}[{\bf With high probability}]
\label{def:whp}
A statement that depends on the initial configuration $\eta$ and initial half-edge $x$ is said to hold 
{\em with high probability (whp)} in $\eta$ and $x$ if the $\mu_n$-measure of the set of pairs $(\eta,x)$ 
for which the statement holds tends to $1$ as $n\to\infty$. 
\end{definition}


\subsubsection{Regularity conditions}

In Theorem~\ref{mainthm} below we use two sets of regularity conditions on the degree sequence:

\begin{cond}
\label{cond-degree-reg} {\bf (Regularity of degrees)}
\begin{itemize}
\item[{\rm (R1)}]
$\ell$ is even and $\ell=\Theta(n)$ as $n\to\infty$.
\item[{\rm (R2)}]
$\limsup_{n\to\infty} \nu_n < \infty$, where 
	\begin{equation}
	\nu_n \coloneqq \frac{\sum_{z\in H}\deg(z)}{\ell}=\frac{\sum_{v\in V} d(v)[d(v)-1]}{\sum_{v\in V} d(v)}
	\end{equation} 
denotes the expected degree of a uniformly chosen half-edge. 
\item[{\rm (R3)}] 
$d(v)\geq 2$ for all $v\in V$.
\end{itemize}
\end{cond}

\begin{cond}
\label{cond-degree-reg2} {\bf (Regularity of degrees)}
\begin{itemize}
\item[{\rm (R1*)}]
$\dmax = \ell^{o(1)}$ as $n\to\infty$, where
	\begin{equation}
	\dmax \coloneqq \max_{v\in V}d(v).
	\end{equation} 
\item[{\rm (R2*)}]
As $n\to\infty$,
	\begin{equation}
	\frac{\lambda_2}{\lambda_1^3} = \omega\left(\frac{(\log\log\ell)^2}{\log\ell}\right),
	\qquad \frac{\lambda_2^{3/2}}{\lambda_3\sqrt{\lambda_1}} 
	= \omega\left(\frac{1}{\sqrt{\log\ell}}\right),
	\end{equation}
where
	\begin{equation}
	\lambda_1 \coloneqq \frac{1}{\ell} \sum_{z\in H} \log(\deg(z)), \qquad 
	\lambda_m \coloneqq \frac{1}{\ell} \sum_{z\in H} |\log(\deg(z))-\lambda_1|^m, \quad m = 2,3.
	\end{equation}
\item [{\rm (R3*)}] 
$d(v)\geq 3$ for all $v\in V$.
\end{itemize}
\end{cond}

\noindent
Condition~{\rm \ref{cond-degree-reg}} was used in \cite{AGvdHdH} to deal with the regime
of `fast graph dynamics'. Conditions (R1) and (R2) are minimal requirements to guarantee 
that the graph is locally tree-like. Condition (R3) ensures that the random walk without 
backtracking is well-defined. Condition~{\rm \ref{cond-degree-reg2}} was used in \cite{BHS} 
to deal with the regime of no graph dynamics, i.e., the static graph. Condition (R1*) provides 
control on the large degrees. Condition (R2*) is technical and states that the degrees vary neither 
too little nor too much. Condition (R3*) ensures that the graph is connected with high probability
and that there are no nodes where the random walk without backtracking moves deterministically. 

Below, we will work under the Conditions (R1)--(R3) as well as (R1*)--(R3*). If $D_n = d(V_n)$ 
denotes the degree of a random vertex, then Condition (R2*) is implied by the often used 
condition that $D_n \to D$ in distribution (when $\prob(D\geq 3)>0$), together with $\expec[D_n]
\to \expec[D]$ (see e.g.\ van der Hofstad~\cite[Chapter 7]{RvdH1}). Thus, Condition (R2*) is 
rather mild. Condition (R1*) excludes vertices with a degree that is a positive power of $n$,
which is claimed to be realistic for real-world networks (see e.g.\ \cite[Chapter 1]{RvdH1} for 
an extensive introduction). We have a truncation argument, along the lines of the one in 
Berestycki, van der Hofstad and Salez~\cite{BvdHS}, showing that the degrees can be 
truncated and the random walk is unlikely to notice this truncation. However, the truncated 
graph may have vertices of degree 2, so that it is not clear how to apply the results in Ben-Hamou
and Salez~\cite{BHS}. Furthermore, we believe that Condition (R3*) is unnecessary for our 
results. We state it here because we rely on the work of \cite{BHS}, who consider random walk 
without backtracking started from the worst-possible starting point. When there is a positive proportion 
of vertices of degree $2$, the configuration model is bound to contain a long path of such 
vertices. On such a stretch, the walk moves deterministically, but it slows down the mixing
because it takes time $\omega(\log{n})$ to leave the stretch. Thus, mixing would occur at 
a time that is $\omega(\log{n})$ larger than that when the walk starts from a uniform vertex, 
which makes worst-case and average-case mixing different. Still, since our walk starts from 
the uniform measure on half-edges, it is unlikely to encounter such a stretch. We refrain 
from investigating this issue further.


\subsubsection{Main theorem}

Define the proportion of rewired edges per unit of time as
\begin{equation}
\label{ratio}
\alpha_n:=k/m, \qquad n \in \N,
\end{equation}
where $m = \ell/2$ is the total number of edges and $k$ is the number of edges that get rewired 
per unit of time. For the static model ($\alpha_n \equiv 0$), under Condition~\ref{cond-degree-reg2}, 
the $\varepsilon$-mixing time $\inf\{t\in\N_0\colon\,\mathcal{D}_{\eta,x}(t) \leq \varepsilon\}$ is known 
to scale like $[1+o(1)]\,\cstat\log n$ for all $\varepsilon \in (0,1)$, with $\cstat = 1/\lambda_1 \in 
(0,\infty)$ (Ben-Hamou and Salez~\cite{BHS}). If Condition~\ref{cond-degree-reg} holds too, then 
$n \mapsto \cstat$ is bounded away from $0$ and $\infty$. If also the degree distribution tends to 
a limit, then $\lim_{n\to\infty} \cstat = c_{\mathrm{stat}} \in (0,\infty)$.

Our main theorem shows that the above behaviour turns into a \emph{trichotomy} for the dynamic 
model:  

\begin{theorem}[{\bf Scaled mixing profiles}]
\label{mainthm}
Suppose that $\lim_{n\to\infty}\alpha_n(\log n)^2 = \beta\in[0,\infty]$. The following hold whp in 
$\eta$ and $x$:
\begin{itemize}
\item[{\rm (1)}] 
Subject to Condition~{\rm \ref{cond-degree-reg}}, if $\beta=\infty$, then
\begin{equation}
\mathcal{D}_{\eta,x}\big(c\alpha_n^{-1/2}\,\big) 
= \mathrm{e}^{-c^2/2} + o(1), \quad c \in [0,\infty).
\end{equation}
\item[{\rm (2)}] 
Subject to Condition~{\rm \ref{cond-degree-reg}(R1)} and Condition~{\rm \ref{cond-degree-reg2}}, 
if $\beta \in (0,\infty)$, 
then
\begin{equation}
\label{equ:critical}
\mathcal{D}_{\eta,x}\big(c \log n\,\big) 
= \left\{\begin{array}{ll}
\mathrm{e}^{-\beta c^2/2} + o(1), &c \in [0,\cstat),\\
o(1), &c \in (\cstat,\infty).
\end{array}
\right.
\end{equation}
\item[{\rm (3)}] 
Subject to Condition~{\rm \ref{cond-degree-reg}(R1)} and Condition~{\rm \ref{cond-degree-reg2}}, 
if $\beta = 0$, then
\begin{equation}
\label{equ:subcritical}
\mathcal{D}_{\eta,x}\big(c \log n\,\big) 
= \left\{\begin{array}{ll}
1-o(1), &c \in [0,\cstat),\\
o(1), &c \in (\cstat,\infty).
\end{array}
\right.
\end{equation}
\end{itemize}
\end{theorem}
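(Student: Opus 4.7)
The plan is to isolate the first time the rewiring dynamics affects the walker (the ``regeneration time'') and to couple the dynamic walk with the static non-backtracking walk on the initial graph up to that time. Let $(X_t^\star)_{t \in \N_0}$ denote the non-backtracking random walk on the frozen pairing $\eta = C_0$, built on the same probability space as $(X_t)_{t \in \N_0}$ by sharing the sibling-selection randomness. Writing $R_s \subset H$ for the $2k$ half-edges rewired at step $s$, define
\begin{equation*}
\tau := \inf\{s \geq 1 \,:\, X_{s-1} \in R_1 \cup \cdots \cup R_s\},
\end{equation*}
the first step at which the walker stands on a half-edge whose partner has been altered by the dynamics. Then $X_s = X_s^\star$ for every $s < \tau$, and on $\{\tau > t\}$ the laws of $X_t$ and $X_t^\star$ coincide. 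Parts (2) and (3) of the theorem amount to analysing the tail of $\tau$ together with the post-regeneration law of the walker (part (1) is already in \cite{AGvdHdH}).

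\emph{Step 1 (tail of $\tau$).} Using that the static non-backtracking walk on a locally tree-like configuration model visits $t$ distinct half-edges whp for $t = O(\log n)$ (under Condition~\ref{cond-degree-reg2}), and that the sets $R_1, R_2, \ldots$ are uniform random size-$2k$ subsets of $H$ that are independent across rounds, a direct annealed computation (condition on the static trajectory, then use independence of the $R_r$) gives whp in $(\eta, x)$
\begin{equation*}
\prob_{\eta,x}(\tau > t) \,=\, (1 - \alpha_n)^{t(t-1)/2}(1 + o(1)) \,=\, \exp(-\alpha_n t^2/2)(1 + o(1)),
\end{equation*}
uniformly for $t \leq C \log n$. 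At $t = c \log n$ this equals $e^{-\beta c^2/2} + o(1)$ in regime (2) and $1 - o(1)$ in regime (3).

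\emph{Step 2 (post-regeneration uniformity).} The key new claim is that on $\{\tau \leq t\}$ the law of $X_t$ is $o(1)$-close to $U_H$: a single regeneration already suffices to mix. Conditional on $\tau = s$ and $X_{s-1} = x$, the new partner $C_s(x)$ is uniformly distributed over $R_s \setminus \{x\}$, and $R_s$ conditioned on containing $x$ is essentially a uniform random subset of $H$, so $C_s(x)$ is almost uniform on $H \setminus \{x\}$. A short computation then shows that the uniformly chosen sibling $X_s$ of $C_s(x)$ lies at total variation distance $O(\dmax/\ell)$ from $U_H$, which is $o(1)$ by Condition~(R1*). Since $U_H$ is stationary under every $P_\zeta$ (the transition matrix is doubly stochastic), the almost-uniformity propagates through the remaining $t - s$ steps, giving
\begin{equation*}
\|\prob_{\eta,x}(X_t \in \cdot \mid \tau \leq t) - U_H\|_{\TV} = o(1).
\end{equation*}

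\emph{Step 3 (assembly and main obstacle).} Writing $p_t := \prob_{\eta,x}(\tau > t)$ and decomposing
\begin{equation*}
\prob_{\eta,x}(X_t \in \cdot) = p_t \, \nu_t^\star + (1 - p_t) \, \nu_t^{\mathrm{reg}},
\end{equation*}
Step~2 gives $\|\nu_t^{\mathrm{reg}} - U_H\|_{\TV} = o(1)$, while Ben-Hamou--Salez \cite{BHS} gives $\|\nu_t^\star - U_H\|_{\TV} = 1 - o(1)$ for $c < \cstat$ and $o(1)$ for $c > \cstat$. Choosing the BHS distinguishing set $A_t^\star \subset H$, on which $\nu_t^\star(A_t^\star) \to 1$ and $U_H(A_t^\star) \to 0$, and using Step~2 to ensure $\nu_t^{\mathrm{reg}}(A_t^\star) = U_H(A_t^\star) + o(1) = o(1)$, yields a matching two-sided bound $\mathcal{D}_{\eta,x}(t) = p_t + o(1)$ when $c < \cstat$; a triangle inequality gives $\mathcal{D}_{\eta,x}(t) = o(1)$ when $c > \cstat$. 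Substituting the tail estimate of Step~1 proves parts (2) and (3). The principal obstacle is Step~2, specifically establishing that a \emph{single} regeneration genuinely mixes, uniformly over typical $(\eta, x)$, and handling the small but non-negligible regime where $\tau$ occurs close to $t$. This is where Conditions~(R1*)--(R3*) are indispensable: they rule out the anomalous local structures (atypical degree neighbourhoods, long deterministic paths) that would prevent the sibling-of-a-uniform-partner distribution from being $o(1)$-close to $U_H$.
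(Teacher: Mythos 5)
Your decomposition is the same as the paper's: the same stopping time $\tau$, the same triangle-inequality split into $\{\tau > t\}$ and $\{\tau \leq t\}$, the same tail estimate $\prob_{\eta,x}(\tau > t) \approx \exp(-\alpha_n t^2/2)$, and the same identification of the $\{\tau > t\}$ piece with the static Ben-Hamou--Salez law. Steps 1 and 3 therefore track the paper's argument closely. The genuine gap is in Step 2, which you correctly flag as the principal obstacle but then sketch in a way that would not go through.

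First, the claim ``conditional on $\tau = s$ and $X_{s-1} = x$, the new partner $C_s(x)$ is uniformly distributed over $R_s \setminus \{x\}$'' is wrong as stated: $\tau = s$ only asserts $X_{s-1} \in R_1 \cup \cdots \cup R_s$, so the relevant half-edge may have been rewired at any time $r \le s$, not at time $s$ itself. Second, even when $x \in R_s$, the conditional distribution of $C_s(x)$ is not simply uniform: conditioning on the entire trajectory $X_{[0,s-1]}$ and on $\tau > s-1$ reveals which edges were traversed (hence not rewired at certain times), which biases $R_s$; moreover for small $k$ the re-pairing can return the old partner with non-negligible probability (e.g.\ $k=1$ makes the dynamics trivial, $k=2$ returns the old partner with probability $1/3$), so ``one regeneration mixes'' needs $k$ large. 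The paper circumvents all of this with a different device: a \emph{modified random walk} on the static graph that resets to a uniform half-edge at a random set of times $\mathcal{T}\subseteq[t]$, where $\mathcal{T}$ is given exactly the distribution of the rewiring history $p_t(T)$. Two lemmas (a coupling lemma showing $\prob_{\eta,x}(A(x_{[0,t]};T))$ is the same for every self-avoiding segmented path, and a factorization lower bound for $\prob_{\eta,x}(X_{[1,t]}=x_{[1,t]}\mid A(x_{[0,t]};T))$, the latter requiring $k=\omega((\log n)^2)$) then give $\prob_{\eta,x}(X_t=\cdot\mid \SA_t, \tau\le t) \ge [1-o(1)]\,\pmod_{\eta,x}(X_t=\cdot\mid \SA_t, \mathcal{T}\neq\varnothing)$, and the latter conditional law is \emph{exactly} $U_H$ after stripping the $\SA_t$ conditioning (which costs $o(1)$). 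You should also note why the missing condition $k=\omega((\log n)^2)$ is harmless: it holds automatically when $\beta\in(0,\infty)$ because $k = \alpha_n m = \Theta(n/(\log n)^2)$, while when $\beta=0$ the bound on $\{\tau\le t\}$ is not needed at all since $\prob_{\eta,x}(\tau\le t)=o(1)$ makes that contribution negligible regardless of its total variation distance from $U_H$.
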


\begin{figure}[htbp]
\begin{center}
\hspace{-.85cm}
\includegraphics[width=0.55\textwidth]{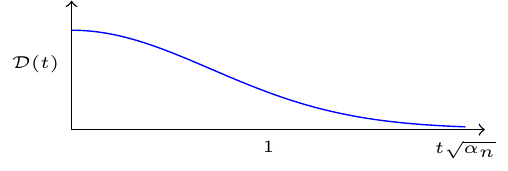}
\includegraphics[width=0.6\textwidth]{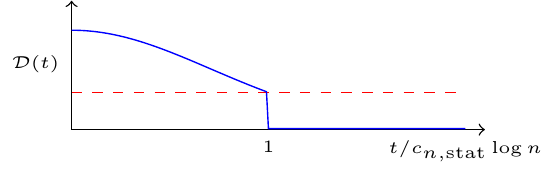}
\includegraphics[width=0.6\textwidth]{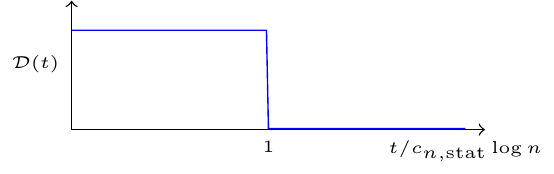}
\caption{\small Plot of $\mathcal{D}(t)$ on time scale $1/\sqrt{\alpha_n}$ for $\beta=\infty$,
respectively, on time scale $\cstat\log n$ for $\beta \in (0,\infty)$ and $\beta = 0$. Because
the scaling holds whp in $\eta$ and $x$, we have suppressed these indices.}
\label{fig:tvdisttrichotomy}
\end{center}
\end{figure}

\noindent
The proof of Theorem~\ref{mainthm} is organised as follows. Theorem~\ref{mainthm}(1) was already
proved in \cite{AGvdHdH}. In Section~\ref{Stopping} we show that Theorems~\ref{mainthm}(2)--(3) 
follow from a key proposition (Proposition~\ref{prop:mainprop} below), which will be proved in 
Sections~\ref{SAtra}--\ref{mainprop}. In Section~\ref{SAtra} we show that on scale $\log n$ 
with high probability the random walk is self-avoiding, i.e., does not visit the same vertex twice, 
and that the same holds for a version of the random walk with random resets. In Section~\ref{mainprop} 
we compute probabilities of rewiring histories and of self-avoiding paths conditional on rewiring 
histories.


\subsection{Discussion}
\label{Discussion}

{\bf 1.}
Theorem~\ref{mainthm} gives the sharp asymptotics of the mixing profiles in three 
regimes, which we refer to as \emph{supercritical} ($\beta=\infty$), \emph{critical} ($\beta 
\in (0,\infty)$) and \emph{subcritical} ($\beta=0$). The latter includes the case of the static
configuration model. While in the supercritical regime the mixing time is of order  
$1/\sqrt{\alpha_n} = o(\log n)$, in the critical and the subcritical regime it is of order
$\log n$ (see Fig.~\ref{fig:tvdisttrichotomy}). Note that for $\beta=\infty$ the scaling
does not depend on the degrees, while for $\beta \in [0,\infty)$ it does via the constant
$\cstat$.  
 
\medskip\noindent 
{\bf 2.}
For the static model, because the scaling of the $\varepsilon$-mixing time does not depend 
on $\varepsilon \in (0,1)$ (Ben-Hamou and Salez~\cite{BHS}) there is \emph{two-sided cutoff}, i.e., 
the total variation distance drops from $1$ to $0$ in a time window of width $o(\log n)$. 
Theorem~\ref{mainthm} shows that this behaviour persists throughout the subcritical regime, 
but that in the critical regime the drop is not from height $1$ but from height $<1$, i.e., 
there is \emph{one-sided cut-off}. In contrast, in the supercritical regime there is \emph{no 
cutoff}, i.e., the total variation distance drops from $1$ to $0$ gradually on scale $1/\sqrt{\alpha_n}$. 
 
\medskip\noindent
{\bf 3.}
We emphasize that we look at the mixing times for `typical' initial conditions and we look 
at the distribution of the random walk averaged over the trajectories of the graph process:
the `annealed' model. It would be interesting to look at different setups, such as `worst-case' 
mixing, in which the maximum of the mixing times over all initial conditions is considered, 
or the `quenched' model, in which the entire trajectory of the graph process is fixed instead 
of just the initial configuration. In such setups the results can be drastically different. 

\medskip\noindent
{\bf 4.} 
It would be of interest to extend our results to random walk \emph{with backtracking}. This 
is much harder. Indeed, because the configuration model is locally tree-like and random 
walk without backtracking on a tree is the same as self-avoiding walk, in our proof we can 
exploit the fact that typical walk trajectories are self-avoiding. In contrast, for the random 
walk with backtracking, after it jumps over a rewired edge, which in our model serves as a 
randomized stopping time, it may jump back over the same edge, in which case it has not 
mixed. This problem remains to be resolved.


\section{Stopping time decomposition}
\label{Stopping}

As in \cite{AGvdHdH}, the proof is based on a randomized stopping time argument.
Let
\begin{equation}
\tau \coloneqq \min\{t\in\N\colon\, X_{t-1}\in R_{\leq t}\}.
\end{equation}
where $R_{\leq t} \coloneqq \cup_{s=1}^t R_s$. By the triangle inequality, 
we have
\begin{align}
\label{equ:tvuppertriangle}
\mathcal{D}_{\eta,x}(t) 
&\leq \prob_{\eta,x}(\tau > t)\|\prob_{\eta,x}(X_t\in\cdot\mid \tau >t) - U_H\|_{\TV} \nn \\
&\qquad+ \prob_{\eta,x}(\tau \leq t)\|\prob_{\eta,x}(X_t\in\cdot\mid \tau \leq t) - U_H\|_{\TV}
\end{align}
and
\begin{align}
\label{equ:tvlowertriangle}
\mathcal{D}_{\eta,x}(t) 
&\geq \prob_{\eta,x}(\tau > t)\|\prob_{\eta,x}(X_t\in\cdot\mid \tau >t) - U_H\|_{\TV}\nn \\
&\qquad- \prob_{\eta,x}(\tau \leq t)\|\prob_{\eta,x}(X_t\in\cdot\mid \tau \leq t) - U_H\|_{\TV}.
\end{align}

\begin{proposition}[\bf{Closeness to stationarity and tail behavior of stopping time}]
\label{prop:mainprop}
$\mbox{}$\\
Suppose that Condition~{\rm \ref{cond-degree-reg}(R1)} and Condition~{\rm \ref{cond-degree-reg2}} 
hold and that $\beta \in [0,\infty)$. If $t = t(n) = [1+o(1)]\,c\log n$ for some $c\in(0,\infty)$, then whp 
in $\eta$ and $x$,
\begin{align}
&\|\prob_{\eta,x}(X_t\in\cdot\mid \tau > t) - U_H(\cdot)\|_{\TV} 
= \begin{cases}
1-o(1),  &c \in [0,\cstat), \\
o(1),     &c \in (\cstat,\infty), 
\label{equ:unstoppedtv}
\end{cases} \\
&\prob_{\eta,x}(\tau > t) = (1-\alpha_n)^{t(t+1)/2}+o(1). 
\label{equ:tautailprob}
\end{align}
If, in addition, $k = k(n) = \omega((\log n)^2)$, then
\begin{equation}
\|\prob_{\eta,x}(X_t\in\cdot\mid \tau\leq t) - U_H(\cdot)\|_{\TV} = o(1). 
\label{equ:stoppedtv}
\end{equation}
\end{proposition}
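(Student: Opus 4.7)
The plan is to prove all three statements from a common decoupling principle combined with the self-avoidance results of Section~\ref{SAtra} and the static mixing result of Ben-Hamou and Salez~\cite{BHS}. The key structural observation is that on $\{\tau>t\}$, since $X_{s-1}\notin R_{\leq s}$ for every $s\leq t$, the edge the walker traverses at each step coincides with its edge in $\eta$; consequently $(X_s)_{s\leq t}$ equals the static non-backtracking trajectory $(\tilde X_s)_{s\leq t}$ on $\eta$ started from $x$. This reduces much of the analysis to counting incidences between a (whp self-avoiding) path and the rewiring sets.

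For \eqref{equ:tautailprob} I would condition on $(\tilde X_0,\ldots,\tilde X_{t-1})$ being self-avoiding, which holds whp by Section~\ref{SAtra}, so the trajectory visits $t$ distinct half-edges. To leading order the sets $R_1,\ldots,R_t$ may be treated as independent uniform $2k$-subsets of $H$, and $\{\tau>t\}$ becomes $\{R_r\cap\{\tilde X_{r-1},\ldots,\tilde X_{t-1}\}=\emptyset$ for all $r\leq t\}$. Since each set to avoid has size $t-r+1=O(\log n)=o(\ell)$, the $r$-th avoidance event has probability $(1-\alpha_n)^{t-r+1}(1+o(1))$, and multiplying over $r$ yields $(1-\alpha_n)^{\sum_{r=1}^{t}(t-r+1)}=(1-\alpha_n)^{t(t+1)/2}$, as required.

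For \eqref{equ:unstoppedtv}: since $X_t=\tilde X_t$ on $\{\tau>t\}$, one has $\prob_{\eta,x}(X_t\in\cdot\mid\tau>t)=\prob_{\eta,x}(\tilde X_t\in\cdot\mid\tau>t)$. The computation above shows that $\prob_{\eta,x}(\tau>t\mid\tilde X_0,\ldots,\tilde X_{t-1})$ is essentially $(1-\alpha_n)^{t(t+1)/2}$ whenever the trajectory is self-avoiding, and this factor is independent of the endpoint; hence the conditioning does not meaningfully bias the endpoint and the conditional law of $X_t$ is close in total variation to the marginal law of $\tilde X_t$ on $\eta$ starting from $x$. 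The cutoff theorem of \cite{BHS} with cutoff time $\cstat\log n$ then yields the dichotomy in \eqref{equ:unstoppedtv}.

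For \eqref{equ:stoppedtv}: on $\{\tau\leq t\}$, let $r_0\leq\tau$ be the most recent rewiring index with $X_{\tau-1}\in R_{r_0}$. Then $C_\tau(X_{\tau-1})=C_{r_0}(X_{\tau-1})$ was chosen uniformly among the remaining half-edges of $R_{r_0}$, and $X_\tau$ is a uniform sibling of that half-edge; a short direct computation, analogous to the verification that $U_H$ is stationary for the joint chain in Section~\ref{Model}, shows that if $C_\tau(X_{\tau-1})$ were uniform on $H$, then $X_\tau$ would be exactly uniform on $H$. Since $k=\omega((\log n)^2)$, the set $R_{r_0}$ of size $2k$ is rich enough that a uniformly chosen element of it is within $o(1)$ in total variation of a uniform element of $H$, even after the mild conditioning implicit in $\{\tau\leq t\}$. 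Stationarity of $U_H$ for the walk then preserves this near-uniformity through to time $t$, giving \eqref{equ:stoppedtv}. The main technical obstacle throughout is that the sets $R_r$ are neither uniform on $2k$-subsets of $H$ nor mutually independent—the rewiring selects $k$ \emph{edges} of the \emph{current} configuration—so the clean decoupling above must be justified by quantitative error bounds uniform over the self-avoiding trajectory, which is the business of Section~\ref{mainprop}.
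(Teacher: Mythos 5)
Your treatment of \eqref{equ:tautailprob} and \eqref{equ:unstoppedtv} follows essentially the same route as the paper. For the tail of $\tau$, you condition on self-avoidance and exploit the near-uniformity of the rewiring sets; the paper packages this into Lemma~\ref{lem:equalpathprobs} (the rewiring-history probability $p_t(T)$ depends only on $t$ and $T$, not on the self-avoiding segmented path) and then cites the computation of $p_t(\varnothing)$ from \cite{AGvdHdH}, but the underlying mechanism is the one you describe. For \eqref{equ:unstoppedtv}, your observation that on $\{\tau>t\}$ the trajectory is the static one and that the conditioning factor is endpoint-independent over self-avoiding paths is precisely how the paper derives $\prob_{\eta,x}(X_t=y\mid\SA_t,\tau>t)=\pstat_{\eta,x}(X_t=y\mid\SA_t)$, after which \cite{BHS} is invoked.

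For \eqref{equ:stoppedtv}, however, your route is genuinely different and the key step as stated is incorrect. You claim that since $k=\omega((\log n)^2)$, the set $R_{r_0}$ of size $2k$ is ``rich enough'' that a uniformly chosen element of it is within $o(1)$ in total variation of a uniform element of $H$. This is false for any fixed set $R_{r_0}$: a uniform element of a fixed $2k$-subset of $H$ is at total variation distance $1-2k/\ell$ from $U_H$, which in the present regime is $1-o(1)$, not $o(1)$. The size of the set is not what produces near-uniformity; what matters is that $R_{r_0}$ is itself a \emph{random} nearly-uniform $2k$-subset, so that \emph{averaging over the randomness of $R_{r_0}$} the uniform element is close to uniform on $H$. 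Making this rigorous requires controlling how the conditioning on $\{\tau\leq t\}$ and on the observed path biases $R_{r_0}$ (it forces $R_{r_0}$ to contain one specific edge, and $k\to\infty$ is what washes out the resulting atom at the original partner of $X_{\tau-1}$), as well as propagating the bound forward to time $t$ over a random, configuration-dependent evolution. The paper sidesteps all of this by introducing the modified random walk with reset distribution $(p_t(T))_{T\subseteq[t]}$: one has the exact identity $\pmod_{\eta,x}(X_t\in\cdot\mid\mathcal{T}\neq\varnothing)=U_H(\cdot)$ by stationarity, and Lemma~\ref{lem:probsinglepath} gives a one-sided path-by-path comparison $\prob_{\eta,x}(X_{[1,t]}=x_{[1,t]}\mid A(x_{[0,t]};T))\geq\frac{1-o(1)}{\ell^r}\prod_{i\notin T}\frac{1}{\deg(x_i)}$ between the dynamic walk and the modified walk; the hypothesis $k=\omega((\log n)^2)$ enters there, controlling the $o(1)$ in that lemma rather than making a single uniform draw from $R_{r_0}$ close to uniform. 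If you want to make your direct argument work you would essentially have to reconstruct something equivalent to that lemma, so the modified-walk device is worth absorbing.
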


We show how Theorems~\ref{mainthm}(2)--(3) follow from Proposition~\ref{prop:mainprop}:

\begin{proof}[Proof of Theorem~{\rm \ref{mainthm}(2)--(3)}]
First we prove \eqref{equ:critical}. Under the condition $\lim_{n\to\infty}\alpha_n(\log n)^2 
= \beta \in (0,\infty)$, since $m = \Theta(n)$ we have $k = \omega((\log n)^2)$, and so 
we can use all three items of Proposition~\ref{prop:mainprop}. From \eqref{equ:tvuppertriangle},
\eqref{equ:tvlowertriangle} and \eqref{equ:stoppedtv} it follows that, for any $t = [1+o(1)]\,c\log n$,
\begin{equation}
\mathcal{D}_{\eta,x}(t) = \prob_{\eta,x}(\tau > t)\|\,
\prob_{\eta,x}(X_t\in\cdot\mid \tau >t) - U_H\|_{\TV} + o(1).
\end{equation}
Since $\lim_{n\to\infty} \alpha_n = 0$ and $t\alpha_n = o(1)$, by \eqref{equ:tautailprob} also
\begin{equation}\label{tailasymp}
\prob_{\eta,x}(\tau > t) = (1-\alpha_n)^{t(t+1)/2} + o(1) = \exp(-\alpha_n t^2/2) + o(1).
\end{equation}
Since $\alpha_n = [1+o(1)]\,\beta/(\log n)^2$, \eqref{tailasymp} together with \eqref{equ:unstoppedtv} 
gives us
\begin{equation}
\mathcal{D}_{\eta,x}(t) =
\begin{cases}
\exp(-\beta c^2/2) + o(1), &c\in[0,\cstat), \\
o(1), &c\in(\cstat,\infty).
\end{cases}
\end{equation}
Next, we prove \eqref{equ:subcritical}. If $\lim_{n\to\infty} \alpha_n(\log n)^2 = \beta = 0$, then by 
\eqref{equ:tautailprob}, for any $t = [1+o(1)]\,c\log n$,
\begin{equation}\label{tailsubcrit}
\prob_{\eta,x}(\tau > t) = \exp(-\alpha_nt^2/2) + o(1) = 1 - o(1),
\qquad \prob_{\eta,x}(\tau \leq t) = o(1).
\end{equation}
Inserting \eqref{tailsubcrit} into \eqref{equ:tvuppertriangle} and \eqref{equ:tvlowertriangle}, we get
\begin{equation}
\mathcal{D}_{\eta,x}(t) = [1-o(1)]\,\|\prob_{\eta,x}(X_t\in\cdot\mid \tau >t) - U_H\|_{\TV} + o(1).
\end{equation}
Using \eqref{equ:unstoppedtv}, we obtain
\begin{equation}
\mathcal{D}_{\eta,x}(t) =
\begin{cases}
1-o(1), &c\in[0,\cstat), \\
o(1), &c\in(\cstat,\infty).
\end{cases}
\end{equation}
\end{proof}


\section{Self-avoiding trajectories}
\label{SAtra}

In this section, we show that the random walk trajectories are self-avoiding on the relevant 
time scales with high probability. We let $\SA_t$ denote the event $\{v(X_s)\neq v(X_{s'})
\text{ for any }0\leq s < s' \leq t\}$, i.e., no two half-edges are incident to the same vertex 
along the trajectory up to time $t$.

Along the way we need a random walk on the static model that is a slightly modified 
version of the random walk without backtracking. This version will be instrumental in 
the proof of our main theorem. For fixed $t\in\N$, define the $t$-step \emph{modified 
random walk} starting from configuration $\eta$ and half-edge $x$ as follows:
\begin{enumerate}
\item 
Let $\mathcal{T}$ be a random subset of $[t]$ drawn according to a probability mass 
function $(p_t(T))_{T\subset[t]}$ with $p_t(\varnothing) \in (0,1)$ for all $t$ (to be defined 
later on).
\item 
At each time $s \in [t]$, if $s\not\in \mathcal{T}$, then the random walk makes a non-backtracking 
move in configuration $\eta$, while if $s\in\mathcal{T}$, then it jumps to a uniformly chosen 
half-edge (possibly the half-edge it is on).
\end{enumerate}
This is a random walk without backtracking that resets its position to a uniformly chosen 
half-edge at certain random times. We denote its law by $\pmod_{\eta,x}$, and put 
$\pmod_{\eta,x}(X_0 = x) = 1$. Note that, although the distribution of this random walk 
depends on $t$ and on the distribution of $\mathcal{T}$, we suppress these from the 
notation.

If we condition on the event that $\mathcal{T}\neq\varnothing$, then the modified random 
walk makes a uniform jump at some time in $[t]$ after which it becomes stationary, and so
\begin{equation}
\label{equ:mod_stat}
\pmod_{\eta,x}(X_t\in\cdot\mid\mathcal{T} \neq \varnothing) = U_H(\cdot).
\end{equation}
On the other hand, if we condition on the event that $\mathcal{T}=\varnothing$, then 
the modified random walk is the same as the random walk without backtracking on 
the static graph given by configuration $\eta$ starting from $x$. Denoting the 
law of the latter by $\pstat_{\eta,x}$, we have
\begin{equation}
\label{equ:modeqstat}
\pmod_{\eta,x}(\cdot\mid\mathcal{T}=\varnothing) = \pstat_{\eta,x}(\cdot).
\end{equation}

The main result of this section is the following lemma: 

\begin{lemma}
Suppose that Condition~{\rm \ref{cond-degree-reg}(R1)} and Condition~{\rm \ref{cond-degree-reg2}(R1*)} 
hold and that $t=[1+o(1)]\,c\log n$ for some $c\in(0,\infty)$. Then whp in $\eta$ and $x$,
\begin{equation}
\label{equ:sa_prob}
\prob_{\eta,x}(\SA_t) = 1-o(1), \qquad \pmod_{\eta,x}(\SA_t) = 1-o(1).
\end{equation}
\end{lemma}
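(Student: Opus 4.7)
I would prove both parts by a first-moment/Markov argument reducing the whp-in-$(\eta,x)$ claim to an annealed estimate. Specifically, it suffices to show that
\begin{equation}
\int_{\Conf_H \times H} \prob_{\eta,x}(\SA_t^c)\, d\mu_n(\eta,x) = o(1),
\end{equation}
and analogously for $\pmod_{\eta,x}$; Markov's inequality then delivers the whp conclusion for any slowly vanishing threshold. The key background fact is that under $\mu_n$ the initial configuration is uniform on $\Conf_H$, and since the rewiring kernel $Q$ from \eqref{DC} has $\Conf_H$ as its stationary distribution, the annealed marginal law of $C_s$ is uniform on $\Conf_H$ for every $s \geq 0$.

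The core estimate is a \emph{principle of deferred decisions}. I expose $\eta$ and the rewirings $R_1, R_2, \ldots$ only as the walk touches them. Inductively assume $\SA_{s-1}$ holds, so that the walk has visited $s$ distinct vertices $v(X_0),\ldots,v(X_{s-1})$ and only $O(s)$ half-edges have been revealed. To produce $X_s$, I expose $C_s(X_{s-1})$; by the uniformity of $C_s$ just noted and symmetry, its conditional law is (up to negligible perturbation) uniform on a pool of $\ell - O(s)$ unrevealed half-edges. A collision at step $s$ requires $C_s(X_{s-1})$ to land on a half-edge incident to one of the $s$ visited vertices, of which there are at most $s\dmax$ in total, giving
\begin{equation}
\prob_{\eta,x}\!\left(v(X_s) \in \{v(X_0),\ldots,v(X_{s-1})\} \,\big|\, \SA_{s-1}\right) \leq \frac{s\dmax}{\ell - O(s)} = O\!\left(\frac{s\dmax}{\ell}\right).
\end{equation}
A union bound over $s \leq t$, together with (R1), (R1*) and $t = O(\log n)$, then yields
\begin{equation}
\prob_{\eta,x}(\SA_t^c) \leq \sum_{s=1}^{t} O\!\left(\frac{s\dmax}{\ell}\right) = O\!\left(\frac{t^2 \dmax}{\ell}\right) = O\!\left(\frac{(\log n)^2 \ell^{o(1)}}{\ell}\right) = o(1).
\end{equation}

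For $\pmod_{\eta,x}$ the same bound applies: between reset times the walk is $\pstat_{\eta,\,\cdot}$, handled by the argument above, and at each reset time $s\in\mathcal{T}$ the walk jumps to an independently uniform half-edge, whose probability of sitting at a visited vertex is again at most $s\dmax/\ell$. Summing the two contributions is unchanged.

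The main technical obstacle is making the deferred-decisions step rigorous in the \emph{dynamic} setting, since each rewiring step $R_s$ touches $k$ uniformly chosen edges, some of which may intersect the already-exposed portion of the graph. A clean way to proceed is to condition on whether the edge incident to $X_{s-1}$ in $C_{s-1}$ is rewired at time $s$: with probability $1-k/m$ it is not, and $C_s(X_{s-1})=C_{s-1}(X_{s-1})$ has the static conditional law of the previous step; with probability $k/m$ it is, and $C_s(X_{s-1})$ is freshly drawn uniformly from the $2k-1$ available partners in the rewiring pool, producing an even smaller collision contribution. Averaging the two cases recovers the $O(s\dmax/\ell)$ bound, with plenty of slack thanks to $\dmax = \ell^{o(1)}$, which absorbs all lower-order terms from half-edges exposed via rewirings.
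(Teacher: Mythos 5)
Your proposal is correct and takes essentially the same approach as the paper: an annealed Markov reduction followed by an exploration (deferred-decisions) argument, using stationarity of the uniform configuration law under $Q$ to get the per-step collision bound $O(s\dmax/\ell)$, a union bound over $s\le t$, and the parallel argument for $\pmod$ with the reset times handled by the uniform jump. The only cosmetic difference is in how the per-step bound is justified --- the paper argues directly by symmetry that any target half-edge $y$ has conditional probability at most $1/(\ell-2s+1)$ of being $C_s(x_{s-1})$ (distinguishing unpaired from already-paired $y$), whereas you split on whether the edge at $X_{s-1}$ was freshly rewired --- but both give the same estimate.
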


\begin{proof}
The proof uses two exploration processes on the graph with the help of the two random walks 
in the annealed setting. Recall that $\mu_n = \Conf_H\times U_H$. The annealed measures 
for the two random walks are defined as
\begin{equation}
\prob(\cdot) := \sum_{\eta,x} \mu_n(\eta,x)\,\prob_{\eta,x}(\cdot),
\qquad \pmod(\cdot) := \sum_{\eta,x} \mu_n(\eta,x)\,\pmod_{\eta,x}(\cdot).
\end{equation}

First, we describe the exploration process for the random walk on the dynamic configuration model. 
To compute the probability of a self-avoiding path, we keep track of already explored half-edges. 
The exploration process proceeds as follows:
\begin{enumerate}
\item 
At time $s=0$, choose $x$ uniformly at random from $H$, set $X_0 = x$ and set $A_0$ to be the 
set containing $x$ and all its siblings (the set of `active' half-edges at time $0$).
\item 
At each time $s\in[t]$, reveal the pair of $X_{s-1} = x_{s-1}$ in $C_s$, say $y_{s-1}$. Denote the 
edge $\{x_{s-1},y_{s-1}\}$ by $e_s$. Add $y_{s-1}$ and all its siblings to $A_{s-1}$ to obtain 
$A_s$ (the set of `active' half-edges at time $s$); some siblings may already have been added 
in a previous step.
\item 
Choose one of the siblings of $y_{s-1}$ uniformly at random, say $x_s$, and set $X_s=x_s$.
\end{enumerate}
This procedure builds up the trajectory of the random walk while ignoring what happens in the 
rest of the graph. Note that we only pair the half-edges along the trajectory, while the siblings 
of the half-edges along the trajectory are not paired until they are visited by the random walk.

Under this construction, the first time the random walk is not self-avoiding is the first time the 
revealed pair at step 2 is in the set of active half-edges. Hence we want to bound the probability
\begin{equation}
\prob\big(C_s(x_{s-1}) \in A_{s-1}\mid e_i\in C_i, i \in [t-1]\big),
\end{equation}
where $e_1,\dots,e_{s-1}$ form a self-avoiding path. For any $y\in H\setminus\{x_{s-1}\}$, if $y$
is not paired up to time $s$, then it can be paired to $x_{s-1}$ through the initial pairing at time 0 
or through rewiring at later times. Since the initial pairing is uniform and this distribution is stationary 
under the graph dynamics, for all such $y$ the above conditional probability is the same, and 
so we have
\begin{equation}
\prob\big(C_s(x_{s-1})=y \mid e_i\in C_i, i \in [s-1]\big) \leq \frac{1}{\ell-2s+1},
\end{equation}
where we note that $2(s-1)$ half-edges are paired before time $s$. On the other hand, if $y \in 
H\setminus\{x_{s-1}\}$ is already paired before time $s$, then it can be paired to $x_{s-1}$ only
through rewiring. Hence the same probability is less than it is for an unpaired half-edge, and so 
we have the same upper bound. Summing over $A_{s-1}$, we get
\begin{equation}
\prob\big(C_s(x_{s-1})\in A_{s-1}\mid e_i\in C_i, i \in [s-1]\big) 
\leq \frac{|A_{s-1}|-1}{\ell-2s+1} \leq \frac{s\dmax}{\ell-2s+1},
\end{equation}
where we use that at each time we activate at most $\dmax = \max_{v \in V} d(v)$ half-edges.
Finally, since $\dmax = n^{o(1)}$ by Condition~{\rm \ref{cond-degree-reg2}}(R1*), $t = [1+o(1)]\,
c\log n$ and $\ell = \Theta(n)$ by Condition~{\rm \ref{cond-degree-reg}}(R1), via a union bound 
and summing over $s\in[t]$, we get
\begin{equation}\label{saupperbd}
\prob(\SA_t^c) \leq \frac{\dmax t(t+1)/2}{\ell-2t+1} = o(1).
\end{equation}
Indeed, by the Markov inequality, for any $(w_n)_{n\in\N}$ that tends to zero arbitrarily slow we have
\begin{equation}
\mu_n(\prob_{\eta,x}(\SA_t^c)>w_n) \leq \frac{\prob(\SA_t^c)}{w_n},
\end{equation}
which implies that, with $\mu_n$-probability at least $1-o(1)$,
\begin{equation}
\prob_{\eta,x}(\SA_t) = 1-o(1).
\end{equation}

Next, we describe the exploration process for the modified random walk. Again, we let $A_t$ 
denote the set of active half-edges. Now, instead of random rewirings, we have a static 
configuration chosen randomly according to the configuration model, and we have a set 
of random times $\mathcal{T}\subset[t]$ at which the random walk makes uniform jumps. 
The exploration process proceeds as follows:
\begin{enumerate}
\item 
At time $s=0$, choose $x$ uniformly at random from $H$, set $X_0 = x$ and set $A_0$ to 
be the set containing $x$ and all its siblings. Choose also $\mathcal{T}\subset [t]$ randomly 
with probabilities $(p_t(T))_{T\in[t]}$.
\item 
At each time $t\in\N$:
\begin{enumerate}
\item 
If $s\in[t]\setminus \mathcal{T}$, then reveal the pair of $X_{s-1} = x_{s-1}$ in $\eta$, say 
$y_{s-1}$. Add $y_{s-1}$ and all its siblings to $A_{s-1}$ to obtain $A_s$. Choose one of 
the siblings of $y_{s-1}$ uniformly at random, say $x_t$, and set $X_s=x_s$.
\item 
If $s\in \mathcal{T}$, then choose $x_s$ uniformly at random from $H$, set $X_s = x_s$, 
add $x_s$ and all its siblings to $A_{s-1}$ to obtain $A_s$. 
\end{enumerate} 
\end{enumerate}
Under this construction, the first time the random walk is not self-avoiding is the first time 
we either have that the revealed pair at step 2(a) is in the set of active half-edges or the 
random walk jumps to an active half-edge at step 2(b). We look at the probability 
\begin{equation}
\pmod(X_s\in A_{s-1}\mid X_{[0,s-1]}=x_{[0,s-1]}),
\end{equation}
where $x_{[0,s-1]}$ is a self-avoiding segmented path. We see that if $s\in\mathcal{T}$,
then this probability is $|A_{s-1}|/\ell$. Otherwise it is at most $(|A_{s-1}|-1)/(\ell-2s+1)$, 
and so we get
\begin{equation}
\pmod\big(X_s\in A_{s-1}\mid X_{[0,s-1]}=x_{[0,s-1]}\big) \leq \frac{|A_{s-1}|}{\ell-2s+1}
\leq \frac{s\dmax}{\ell-2s+1}.
\end{equation} 
This bounds agrees with \eqref{saupperbd}, so we get the same conclusion for $\pmod$. 
Hence, with $\mu_n$-probability at least $1-o(1)$,
\begin{equation}
\pmod_{\eta,x}(\SA_t) = 1-o(1).
\end{equation}
\end{proof}

The proof for the modified random walk can be easily adapted to the random walk without backtracking
on the static graph, simply by removing step 2(b) in the exploration process for the modified random 
walk. Hence we also have, whp in $\eta$ and $x$,
\begin{equation}
\label{equ:sa_prob_stat}
\pstat_{\eta,x}(\SA_t) = 1-o(1).
\end{equation}


\section{Proof of the main proposition}
\label{mainprop}

In this section, we prove Proposition~\ref{prop:mainprop}. We use the notation introduced in 
\cite{AGvdHdH} and recall some of the definitions that are needed along the way. 

For a fixed sequence of half-edges $x_{[0,t]}$ with $x_0 = x$ and a fixed set of times $T 
\subseteq [t]$, we use the short-hand notation 
\begin{equation}
A(x_{[0,t]}; T) \coloneqq \big\{x_{i-1} \in R_{\leq i}\,\,\forall\, i\in T, \,x_{j-1}
\not\in R_{\leq j}\,\,\forall\, j\in [1,t] \setminus T\big\},
\end{equation}
where $R_{\leq i}$ denotes the set of half-edges that are rewired up to time $i$. This event 
gives us the rewiring history for the sequence of half-edges $x_{[0,t]}$. More precisely, 
it is the event that for $i\in[t]\setminus T$ the half-edge $x_{i-1}$ in not rewired until time $i$, and 
for $i\in T$ the half-edge $x_{i-1}$ is rewired at some time before or at time $i$.

We say that a sequence $x_{[0,t]}$ of half-edges of length $t$ is a \emph{self-avoiding segmented 
path} in the configuration $\eta$ with respect to $T =\{t_1,\dots,t_r\}\subset[t]$ if $x_{[0,t]}$ is 
self-avoiding, meaning that no two half-edges in $x_{[0,t]}$ are siblings, and each subsequence 
$x_{[t_{i-1},t_i-1]}$ induces a path in $\eta$ for $i\in[r+1]$ with $t_0 = 0$ and $t_{r+1} = t+1$. We 
denote by $\SP_t^\eta(x,y;T)$ the set of all self-avoiding segmented paths in $\eta$ with respect 
to $T$ with $x_0 = x$ and $x_t = y$ (see Fig.~\ref{sasegp}) and by $\SP_t^\eta(x;T)$ the set of 
all self-avoiding segmented paths in $\eta$ with respect to $T$ with $x_0 =x$. Note that for 
$T = \varnothing$ these are simply the sets of self-avoiding paths.

\begin{figure}[htbp]
\centering
\includegraphics[width=0.4\textwidth]{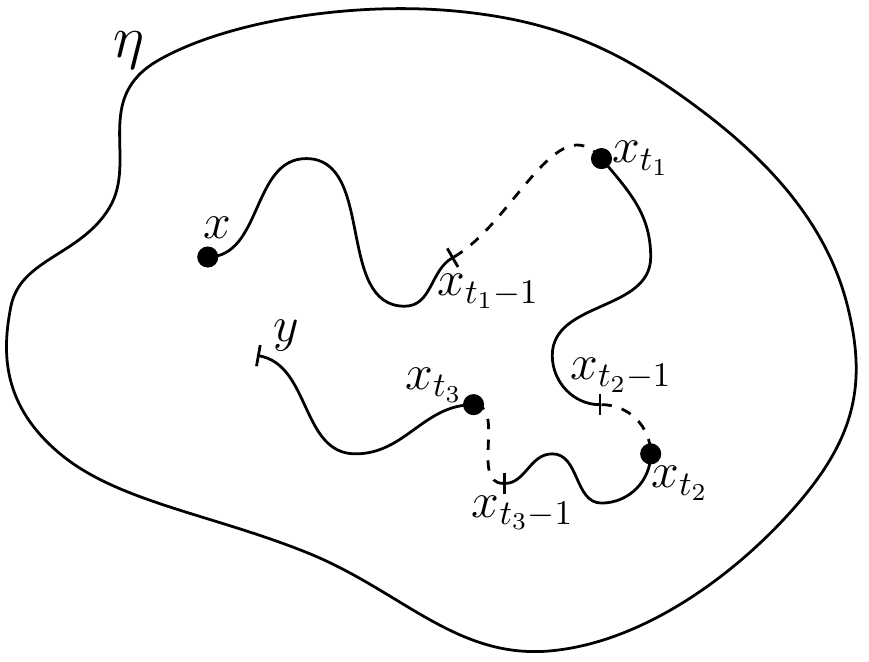}
\caption{\small An element of $\SP_t^\eta(x,y;T)$ with $T=\{t_1,t_2,t_3\}$.}
\label{sasegp}
\end{figure}

Lemmas~\ref{lem:equalpathprobs} and \ref{lem:probsinglepath} below are slight 
modifications of \cite[Lemmas 3.1--3.2]{AGvdHdH} and will be instrumental in the 
proof of Proposition~\ref{prop:mainprop}. The first lemma is concerned with the 
probabilities of the rewiring histories of self-avoiding segmented paths:

\begin{lemma}[\bf{Rewiring histories of self-avoiding segmented paths}]
\label{lem:equalpathprobs}
Fix $t\in\N$, $T\subseteq[t]$ and $\eta,\zeta\in\Conf_H$. Suppose that $x_{[0,t]}$ and 
$y_{[0,t]}$ are two self-avoiding segmented paths in $\eta$ and $\zeta$, respectively, 
of length $t+1$. Then
\begin{equation}
\prob_{\eta,x}\big(A(x_{[0,t]};T)\big) = \prob_{\zeta,y}\big(A(y_{[0,t]};T)\big).
\end{equation}
\end{lemma}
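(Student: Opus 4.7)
The plan is to reduce the equality to a symmetry of the rewiring Markov chain $(C_s)_{s\ge 0}$. Since $A(x_{[0,t]};T)$ depends only on the rewiring process $(R_s)_{s\in[t]}$ and the initial configuration, not on the random walk, I would first marginalize out the walk and work with the configuration chain alone. The kernel $Q$ in \eqref{DC} depends on its arguments only through $d_\text{Ham}$, so it is invariant under the natural action of the symmetric group of bijections of $H$: writing $\pi\cdot\eta \coloneqq \{\{\pi(a),\pi(b)\}:\{a,b\}\in\eta\}$ for a bijection $\pi\colon H\to H$, one has $Q(\pi\cdot\eta,\pi\cdot\zeta)=Q(\eta,\zeta)$. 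Consequently, the joint law of $(C_s,R_s)_{s\in[t]}$ started from $\eta$ pushes forward under $\pi$ to the same law started from $\pi\cdot\eta$, with $R_s$ relabelled as $\pi(R_s)$. The lemma therefore reduces to exhibiting $\pi$ with $\pi\cdot\eta=\zeta$ and $\pi(x_i)=y_i$ for all $i\in[0,t]$, for then $A(x_{[0,t]};T)$ pulls back to $A(y_{[0,t]};T)$.

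Constructing this $\pi$ is the technical heart of the proof. I would first unpack the rigidity of a self-avoiding segmented path: within a segment $x_{[t_{j-1},t_j-1]}$, the path condition $\eta(x_i)\sim x_{i+1}$ with $\eta(x_i)\neq x_{i+1}$ pins $\eta(x_i)$ to the specific sibling of $x_{i+1}$ through which the walk enters the vertex $v(x_{i+1})$; combined with the self-avoidance hypothesis, this forces $\eta(x_i)\notin\{x_0,\dots,x_t\}$ for all such ``interior'' indices. Only at the segment endpoints $\{t_j-1:j\in[r]\}\cup\{t\}$ is $\eta(x_i)$ left unconstrained by the segment structure, and a short case check using self-avoidance and non-backtracking shows that $\eta(x_i)=x_j$ for distinct $i,j\in[0,t]$ is possible only when both are segment endpoints. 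The analogous analysis applies on the $(y_{[0,t]},\zeta)$ side, so the combinatorial ``type'' of the path (given by $T$ together with the matching among segment endpoints that are paired to each other in $\eta$) coincides for the two paths.

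With the type matching in hand, I would define $\pi$ on $\{x_i,\eta(x_i):i\in[0,t]\}$ by $\pi(x_i)=y_i$ and $\pi(\eta(x_i))=\zeta(y_i)$; the matching of types guarantees that this partial map is a well-defined injection respecting the pairings. Since the remaining half-edges form the same number of $\eta$-pairs as of $\zeta$-pairs, $\pi$ extends to a pairing-preserving bijection of $H$, and the $S_H$-equivariance recorded above then yields $\prob_{\eta,x}(A(x_{[0,t]};T))=\prob_{\zeta,y}(A(y_{[0,t]};T))$. The main obstacle is the combinatorial bookkeeping required to verify that the partial map $\pi$ is consistent in all cases, particularly when several segment endpoints are paired among themselves in $\eta$; formalising this bijection and its extension in the segmented setting is precisely what the ``slight modification'' of \cite[Lemma~3.1]{AGvdHdH} is designed to handle.
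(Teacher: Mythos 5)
Your approach is essentially the same as the paper's: exhibit a bijection $\pi\colon H\to H$ (the paper calls it $f$) satisfying $\pi(x_i)=y_i$ for all $i$ and $\pi\cdot\eta=\zeta$, and then use the symmetry of the rewiring dynamics under relabelling of half-edges. Your packaging of this symmetry as $S_H$-equivariance of the kernel $Q$ (via the Hamming-distance formula \eqref{DC}) is a cleaner reformulation of the paper's explicit Markovian coupling, but it is the same mechanism, and the reduction to ``find a suitable $\pi$'' is the same reduction.

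Where you diverge is in the treatment of the existence of $\pi$, and this is where you have a genuine gap. The paper simply writes ``Let $f$ be a one-to-one map\ldots'', tacitly assuming such a map exists; you correctly flag this as the crux and work out the necessary and sufficient condition: $\pi$ exists iff $\eta(x_i)=x_j$ holds exactly when $\zeta(y_i)=y_j$, i.e.\ the two paths have the same pairing pattern among their own half-edges. Your case analysis showing that such coincidences can only occur at segment endpoints is correct. But your next sentence --- ``so the combinatorial `type' of the path\ldots coincides for the two paths'' --- is a non-sequitur. Restricting the possible locations of self-pairings to segment endpoints on \emph{each} side does not force the two matchings to agree: nothing in the definition of a self-avoiding segmented path w.r.t.\ $T$ prevents $\eta(x_{t_1-1})=x_{t_2-1}$ on one side while $\zeta(y_{t_1-1})\notin\{y_0,\dots,y_t\}$ on the other, and in that situation no $\pi$ with the two required properties exists. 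To close the argument one must either strengthen the definition of $\SP_t^\eta(x,y;T)$ to rule out pairings among segment endpoints, or verify separately that the probability of $A(\cdot;T)$ is insensitive to the endpoint-pairing pattern; neither is addressed in your sketch. (This gap is inherited verbatim from the paper's own one-line assertion of $f$'s existence, so your proposal is in fact a more careful account of the same argument --- you located the issue but did not resolve it.)
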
 

\begin{proof}
The proof follows the same line of argument as in the proof of \cite[Lemma 3.1]{AGvdHdH}
and uses a coupling between two dynamic configuration models. Let $f$ be a one-to-one 
map from $H$ to itself with the property that it maps $x_i$ to $y_i$ for all $i\in[0,t]$, and 
preserves the edges between two configuration $\eta$ and $\zeta$, i.e., $f(\eta(x)) = \zeta(f(x))$ 
for all $x\in H$. The Markovian coupling $(C_t^x,C_t^y)_{t\in\N_0}$, where $C_0^x = \eta$ 
and $C_0^y = \zeta$, proceeds at every step $t\in\N$ as follows:
\begin{enumerate}
\item 
Choose $k$ edges from $C_{t-1}^x$ uniformly at random without replacement, say 
$\{z_1,z_2\},\dots,$ $\{z_{2k-1},z_{2k}\}$. Choose the edges $\{f(z_1),f(z_2)\},\dots,
\{f(z_{2k-1}),f(z_{2k})\}$ from $C_{t-1}^y$.
\item 
Rewire the half-edges $z_1,\dots,z_{2k}$ uniformly at random to obtain $C_t^x$. Set 
$C_t^y(f(z_i)) = f(C_t^x(z_i))$.
\end{enumerate}
Since under the coupling the event $A(x_{[0,t]};T)$ on $\eta$ is the same as the event 
$A(y_{[0,t]};T)$ on $\zeta$, we get the desired result.
\end{proof}

From this lemma we see that the probability of a specific rewiring history for a self-avoiding 
segmented path does not depend on the path itself nor on the configuration: it only depends on 
$t$ and $T$. In what follows we set $p_t(T) = \prob_{\eta,x}(A(x_{[0,t]},T))$ for which 
$\prob_{\eta,x}(A(x_{[0,t]},T))>0$ for all $T\subset [t]$. When we refer to the modified random 
walk we will use these probabilities as the distribution for the random times $\mathcal{T}$.

The second lemma is concerned with path probabilities for the random walk conditioned on the 
rewiring history:

\begin{lemma}[{\bf Paths estimate given rewiring history}]
\label{lem:probsinglepath}
Suppose that $t = t(n) = [1+o(1)]\,c\log n$ for some $c\in(0,\infty)$, $k = k(n) = \omega((\log n)^2)$ 
and $T =\{t_1,\dots,t_r\}\subseteq [t]$. Let $x_0\cdots x_t\in\SP_t^\eta(x,y;T)$ be a self-avoiding 
segmented path in $\eta$ that starts at $x$ and ends at $y$. Then
\begin{equation}
\prob_{\eta,x}\big(X_{[1,t]} = x_{[1,t]}\mid A(x_{[0,t]};T)\big) 
\geq \frac{1-o(1)}{\ell^r}\prod_{i\in[1,t]\setminus T}\frac{1}{\deg(x_i)}.
\end{equation} 
\end{lemma}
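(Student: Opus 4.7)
The plan is to factor the conditional probability step-by-step along the time axis, accumulating a factor $1/\deg(x_j)$ at each non-rewiring step $j\in[1,t]\setminus T$ and a factor of at least $(1-o(1))/\ell$ at each rewiring step $i\in T$; multiplying these gives the claimed lower bound.

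First I would handle the deterministic contribution from non-rewiring steps. Under $A(x_{[0,t]};T)$, for $j\in[1,t]\setminus T$ the condition $x_{j-1}\notin R_{\leq j}$ forces $C_j(x_{j-1})=\eta(x_{j-1})$. By the segmented-path hypothesis, $x_j$ is a sibling of $\eta(x_{j-1})$ distinct from $\eta(x_{j-1})$, and the non-backtracking transition~\eqref{NBT} picks $x_j$ with probability exactly $1/\deg(\eta(x_{j-1}))=1/\deg(x_j)$. This yields the clean product $\prod_{j\in[1,t]\setminus T}1/\deg(x_j)$, independent of the rewiring randomness.

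The main work is estimating the rewiring contributions. For each $i\in T$, let $s_i\leq i$ denote the last time at which $x_{i-1}$ is rewired; since $x_{i-1}$ is not rewired strictly between $s_i$ and $i$, we have $C_i(x_{i-1})=C_{s_i}(x_{i-1})$. Using the rewiring mechanism (uniform choice of $k$ edges from $C_{s_i-1}$, followed by a uniform pairing of the resulting $2k$ half-edges), I would show that, conditional on the preceding history and on $A(x_{[0,t]};T)$, the pair $C_{s_i}(x_{i-1})$ is approximately uniform on $H\setminus\{x_{i-1}\}$, with probability $[1+O(1/k)+O(t/\ell)]/(\ell-1)$ attached to each admissible target $y$. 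Summing over the $\deg(x_i)$ siblings of $x_i$ and multiplying by the walk-choice factor $1/\deg(x_i)$ yields a per-step contribution of at least $[1-O(t/\ell)-O(1/k)]/\ell$. Aggregating over the $r\leq t=O(\log n)$ rewiring steps, the total $(1-o(1))^r$ factor survives because the assumptions $k=\omega((\log n)^2)$ and $\ell=\Theta(n)$ make each per-step error $o(1/\log n)$.

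The main obstacle I anticipate is the dependence among rewirings when several indices $t_j\in T$ share the same underlying rewiring step. Inside a single batch $R_s$, the pairings are jointly sampled from a uniform matching of $2k$ elements, so conditioning on multiple pair constraints simultaneously does not decouple for free. I would handle this by first revealing the batch memberships $R_s$, exploiting that $\alpha_n=o(1)$ ensures that very few path half-edges share any one batch, and then using that a uniform matching conditioned on several disjoint specified pairs remains uniform on the complementary elements, preserving a $(1-o(1))/\ell$ factor per constraint. This refines the strategy of \cite[Lemma 3.2]{AGvdHdH}, adapting it from a plain self-avoiding path to the segmented version required here.
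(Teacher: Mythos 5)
Your proposal is correct and follows the same line of argument that the paper defers to in \cite[Lemma 3.2]{AGvdHdH}. The decomposition into non-rewiring steps (where conditioning on $x_{j-1}\notin R_{\leq j}$ pins $C_j(x_{j-1})=\eta(x_{j-1})$ and gives an exact factor $1/\deg(x_j)$) and rewiring steps (where $C_i(x_{i-1})$ is approximately uniform, so summing over the $\deg(x_i)$ admissible siblings of $x_i$ and cancelling against the walk-choice factor $1/\deg(x_i)$ yields $(1-o(1))/\ell$) is exactly the structure underlying the cited lemma, and you correctly note why the $\deg(x_i)$-factors for $i\in T$ drop out of the stated bound. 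Your error accounting is also right: the per-step corrections $O(t/\ell)+O(1/k)$ are $o(1/\log n)$ under $\ell=\Theta(n)$ and $k=\omega((\log n)^2)$, so the $(1-o(1))$ factor survives aggregation over $r\le t=O(\log n)$ rewiring constraints. The one place I would push for more care if you were to write this out in full is your treatment of the joint conditioning: the event $A(x_{[0,t]};T)$ also constrains the half-edges $x_{j-1}$ with $j\notin T$ to stay \emph{unrewired}, and these negative constraints tilt the law of the batches $R_s$ as well as the uniform matching inside each batch; your proposed remedy (reveal batch memberships first, then use exchangeability of the residual uniform matching) is the right idea, but you should verify that the negative constraints only deplete the admissible targets by $O(t)$ out of $\Theta(\ell)$ and that the conditioning on several $x_{i-1}$ belonging to batches affects the matching of any one of them by at most $O(1/k)$, which is where the hypothesis $k=\omega((\log n)^2)$ is genuinely needed.
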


\begin{proof}
The proof follows the same line of argument as the proof of \cite[Lemma 3.2]{AGvdHdH}.
\end{proof}

We continue with the proof of Proposition~\ref{prop:mainprop}. We start by proving the result on 
the tail probabilities of $\tau$, since this is easier.

\begin{proof}[$\rhd$ Proof of \eqref{equ:tautailprob}]
Using \eqref{equ:sa_prob}, we see that
\begin{equation}
\label{equ:sa_tau}
\prob_{\eta,x}(\tau>t)-o(1) \leq \prob_{\eta,x}(\SA_t, \tau>t) \leq \prob_{\eta,x}(\tau>t).
\end{equation}
On the other hand, by considering all possible self-avoiding paths,
\begin{align}
\label{equ:sa_tau_sum}
\prob_{\eta,x}(\SA_t,\tau>t) 
&= \sum_{x_{[0,t]}\in\SP_t^\eta(x;\varnothing)}
\prob_{\eta,x}\big(X_{[1,t]}=x_{[1,t]},A(x_{[0,t]};\varnothing)\big) \nn \\
&=  \sum_{x_{[0,t]}\in\SP_t^\eta(x;\varnothing)}\left(\prod_{i=1}^t\frac{1}{\deg(x_i)}\right)
\prob_{\eta,x}\big(A(x_{[0,t]};\varnothing)\big) \nn \\
&= p_t(\varnothing)\,\pstat_{\eta,x}(\SA_t),
\end{align}
where in the second line we use that
\begin{equation}
\prob_{\eta,x}\big(X_{[1,t]}=x_{[1,t]}\mid A(x_{[0,t]};\varnothing)\big) = \prod_{i=1}^t\frac{1}{\deg(x_i)}
\end{equation}
and in the third line that these are the path probabilities for the random walk without
backtracking in the static model. By following the proof \cite[Eq.\ (2.6)]{AGvdHdH}, we also 
get
\begin{equation}
\prob_{\eta,x}\big(A(x_{[0,t]};\varnothing)\big) = (1-\alpha_n)^{t(t+1)/2}+o(1).
\end{equation}
Combining this with \eqref{equ:sa_prob_stat}, we obtain
\begin{equation}
\prob_{\eta,x}(\SA_t,\tau>t) = (1-\alpha_n)^{t(t+1)/2} + o(1),
\end{equation}
and the claim follows from \eqref{equ:sa_tau}.
\end{proof}

\begin{proof}[$\rhd$ Proof of~\eqref{equ:unstoppedtv}]
Fix $y\in H$. We have
\begin{align}
\prob_{\eta,x}(X_t = y, \SA_t,\tau>t) 
&= \sum_{x_{[0,t]}\in\SP_t^\eta(x,y;\varnothing)} \prob_{\eta,x}\big(X_{[1,t]}=x_{[1,t]},\tau>t\big) \nn \\
&= \sum_{x_{[0,t]}\in\SP_t^\eta(x,y;\varnothing)}\left(\prod_{i=1}^t\frac{1}{\deg(x_i)}\right)
\prob_{\eta,x}\big(A(x_{[0,t]};\varnothing)\big) \\ \nn
&= p_t(\varnothing)\,\prob_{\eta,x}^{\textrm{stat}}(X_t =y,\SA_t).
\end{align}
Using the third line of \eqref{equ:sa_tau_sum}, we obtain
\begin{equation}
\label{equ:dyn_stat_sa}
\prob_{\eta,x}(X_t = y\mid \SA_t,\tau>t) 
= \prob_{\eta,x}^{\textrm{stat}}(X_t =y\mid  \SA_t).
\end{equation}
On the other hand, by partitioning according to $\SA_t$ and $\SA_t^c$ and using that 
$\prob_{\eta,x}(\SA_t) = 1-o(1)$, we obtain
\begin{equation}
\|\prob_{\eta,x}(X_t\in\cdot\mid \tau > t)-\prob_{\eta,x}(X_t\in\cdot\mid \SA_t, \tau>t)\|_{\TV}  = o(1),
\end{equation}
and
\begin{equation}
\|\prob_{\eta,x}^{\mathrm{stat}}(X_t\in\cdot\,)
-\prob_{\eta,x}^{\mathrm{stat}}(X_t\in\cdot\mid \SA_t)\|_{\TV} = o(1).
\end{equation}
Combining these relations with \eqref{equ:dyn_stat_sa}, we obtain
\begin{equation}
\|\prob_{\eta,x}(X_t\in\cdot\mid\tau>t)-\pstat_{\eta,x}(X_t\in\cdot)\|_{\TV}  = o(1).
\end{equation}
Using the results of \cite{BHS} for the random walk without backtracking in the static configuration 
model, we see that if $t = [1+o(1)]c\log n$ with $c \in (0,\cstat)$, then
\begin{equation}
\|\prob_{\eta,x}(X_t\in\cdot\mid\tau>t)-U_H\|_{\TV}  = 1 - o(1),
\end{equation}
while if $t = [1+o(1)]\,c\log n$ with $c \in (\cstat,\infty)$, then
\begin{equation}
\|\prob_{\eta,x}(X_t\in\cdot\mid\tau>t)-U_H\|_{\TV}  = o(1).
\end{equation}
\end{proof}

\begin{proof}[$\rhd$ Proof of~\eqref{equ:stoppedtv}]
Fix $y\in H$ and suppose that $k = k(n) = \omega((\log n)^2)$. Using Lemmas~\ref{lem:equalpathprobs} 
and \ref{lem:probsinglepath},
\begin{align}
&\prob_{\eta,x}(X_t = y, \SA_t, \tau\leq t) \nn \\
&\qquad = \sum_{r=1}^t\sum_{\substack{T\subseteq [1,t]\\|T|=r}}
\sum_{x_{[0,t]}\in\SP(x,y;T)}\prob_{\eta,x}\big(X_{[0,t]}=x_{[0,t]}\mid A(x_{[0,t]};T)\big)\,
\prob_{\eta,x}\big(A(x_{[0,t]};T)\big) \nn \\
&\qquad \geq [1-o(1)] \sum_{r=1}^t\sum_{\substack{T\subseteq [1,t]\\|T|=r}}
p_t(T)\sum_{x_{[0,t]}\in\SP(x,y;T)}\left(\prod_{i\in[t]\setminus T}
\frac{1}{\deg(x_i)}\right)\frac{1}{\ell^r}.
\end{align}
We immediately note that
\begin{equation}
\pmod_{\eta,x}(X_t = y, \SA_t\mid \mathcal{T} = T) 
= \sum_{x_{[0,t]}\in\SP(x,y;T)}\left(\prod_{i\in[t]\setminus T}\frac{1}{\deg(x_i)}\right)\frac{1}{\ell^{|T|}},
\end{equation}
and so
\begin{equation}
\label{equ:dyn_mod_sa}
\prob_{\eta,x}(X_t = y, \SA_t, \tau\leq t) \geq [1-o(1)]\,
\pmod_{\eta,x}(X_t =y, \SA_t, \mathcal{T}\neq \varnothing).
\end{equation}
Using \eqref{equ:modeqstat}, \eqref{equ:sa_prob} and \eqref{equ:sa_tau_sum}, whp in $\eta$ and 
$x$, we also have
\begin{align}
\prob_{\eta,x}(\SA_t,\tau\leq t) 
&= \prob_{\eta,x}(\SA_t) - \prob_{\eta,x}(\SA_t,\tau > t) \nn \\
&\leq \pmod_{\eta,x}(\SA_t) + o(1) - p_t(\varnothing)\,\pstat_{\eta,x}(\SA_t) \nn \\
&= \pmod_{\eta,x}(\SA_t) + o(1) - \pmod_{\eta,x}(\SA_t,\mathcal{T} = \varnothing) \nn \\
&= \pmod_{\eta,x}(\SA_t,\mathcal{T}\neq\varnothing) + o(1).
\end{align}
Combining this with \eqref{equ:dyn_mod_sa} we get, for any $y\in H$,
\begin{equation}
\prob_{\eta,x}(X_t = y\mid \SA_t, \tau\leq t) \geq [1-o(1)]\,
\pmod_{\eta,x}(X_t =y\mid \SA_t, \mathcal{T}\neq \varnothing).
\end{equation}
which in turn gives
\begin{equation}
\label{equ:dyn_mod_sa_tv}
\| \prob_{\eta,x}(X_t \in\cdot \mid \SA_t, \tau\leq t) 
- \pmod_{\eta,x}(X_t \in\cdot\mid \SA_t, \mathcal{T}\neq \varnothing)\|_{\TV} = o(1).
\end{equation}
On the other hand, \eqref{equ:sa_prob} gives
\begin{equation}
\| \prob_{\eta,x}(X_t \in\cdot \mid \SA_t, \tau\leq t) 
- \prob_{\eta,x}(X_t \in\cdot \mid \tau\leq t)\|_{\TV} = o(1)
\end{equation}
and
\begin{equation}
\| \pmod_{\eta,x}(X_t \in\cdot\mid \SA_t, \mathcal{T}\neq \varnothing) 
- \pmod_{\eta,x}(X_t \in\cdot\mid \mathcal{T}\neq \varnothing)\|_{\TV} = o(1).
\end{equation}
Finally, from the latter two relations in combination with \eqref{equ:mod_stat} and 
\eqref{equ:dyn_mod_sa_tv}, we get
\begin{equation}
\| \prob_{\eta,x}(X_t \in\cdot \mid \tau\leq t) - U_H(\cdot)\|_{\TV} = o(1),
\end{equation}
which is the desired result.
\end{proof}

{\small 

}

\end{document}